\documentclass[a4paper,reqno, 12pt]{amsart}
\usepackage[T1]{fontenc}
\usepackage{amsmath}
\usepackage{amsthm}
\usepackage{amssymb}
\usepackage{mathrsfs}
\usepackage{graphicx}
\usepackage{hyperref}
\hypersetup{colorlinks=true,linkcolor=black,citecolor=black,filecolor=black,urlcolor=black}
\usepackage[utf8]{inputenc}
\usepackage{tikz}

\addtolength{\textwidth}{3 truecm}
\addtolength{\textheight}{1 truecm}
\setlength{\voffset}{-.6 truecm}
\setlength{\hoffset}{-1.3 truecm}
\parindent 0mm
\parskip   5mm

\newcommand{\R}{\mathbb{R}}
\DeclareMathOperator{\stp}{P_{\mathrm{sp.trees}}}
\DeclareMathOperator{\nesubp}{P^\star_{\mathrm{sub}}}
\DeclareMathOperator{\subp}{P_{\mathrm{sub}}}

\DeclareMathOperator{\conv}{conv}
\DeclareMathOperator{\xc}{xc}

\newtheorem{prop}{Proposition}
\newtheorem{conj}{Conjecture}

\newtheorem{lem}[prop]{Lemma}

\newtheorem{thm}[prop]{Theorem}

\newtheoremstyle{question}{}{}{\color{blue}}{}{\color{blue}\bfseries}{}{ }{}
\theoremstyle{question}

\newtheoremstyle{openquestion}{}{}{\color{red}}{}{\color{red}\bfseries}{}{ }{}
\theoremstyle{openquestion}

\begin{document}
\title[Spanning Tree Polytope of Bounded-genus Graphs]{Smaller Extended Formulations for the Spanning Tree Polytope of Bounded-genus Graphs}

\makeatletter
\let\old@setaddresses\@setaddresses
\def\@setaddresses{\bigskip\bgroup\parindent 0pt\let\scshape\relax\old@setaddresses\egroup}
\makeatother

\author[S.~Fiorini]{Samuel Fiorini}
\author[T.~Huynh]{Tony Huynh}
\address[S.~Fiorini, T.~Huynh]{Mathematics Department \\
  Universit\'e Libre de Bruxelles\\
  Brussels\\
  Belgium}
\email{sfiorini@ulb.ac.be, tony.bourbaki@gmail.com}

\author[G.~Joret]{Gwena\"{e}l Joret}
\address[G.~Joret]{Computer Science Department \\
  Universit\'e Libre de Bruxelles\\
  Brussels\\
  Belgium}
\email{gjoret@ulb.ac.be}

\author[K.~Pashkovich]{Kanstantsin Pashkovich}
\address[K.~Pashkovich]{
Department of Combinatorics and Optimization \\
University of Waterloo \\
Waterloo \\
Canada
}
\email{kanstantsin.pashkovich@gmail.com}

\maketitle

\sloppy

\begin{abstract}
We give an $O(g^{1/2} n^{3/2} + g^{3/2} n^{1/2})$-size extended formulation for the spanning tree polytope of an $n$-vertex graph embedded in a surface of genus $g$, improving on the known $O(n^2 + g n)$-size extended formulations following from Wong~\cite{Wong80} and Martin~\cite{Martin91}.
\end{abstract}

\section{Introduction}

An \emph{extended formulation} of a (convex) polytope $P \subseteq \R^d$ is a linear system $Ax + By \leqslant b,\ Cx + Dy = c$ in variables $x \in \R^d$ and $y \in \R^k$ that provides a description of $P$ in the sense that
$$
P = \{x \in \R^d \mid \exists y \in \R^k : Ax + By \leqslant b,\ Cx + Dy = c\}\,.
$$
The \emph{size} of an extended formulation is defined as its number of inequalities. The \emph{extension complexity} $\xc(P)$ is the minimum size of an extended formulation of $P$. Notice that equalities are not accounted for in the size of an extended formulation. In fact, we may equivalently define the extension complexity of $P$ as the minimum number of facets of a polytope that affinely projects to $P$.

Let $G = (V,E)$ be a connected (simple, finite, undirected) graph. The \emph{spanning tree polytope} of $G$ is the convex hull of the $0/1$-vectors in $\R^E$  that are the characteristic vector of some spanning tree of $G$. We denote this polytope as $\stp(G)$, and use the notation
$$
\stp(G) = \conv \{\chi^T \in \{0,1\}^E \mid T \subseteq E,\ T \text{ spanning tree of } G\}\,.
$$

The following result gives the best known upper bound on the extension complexity of the spanning tree polytope for general graphs, and is due to Wong~\cite{Wong80} and Martin~\cite{Martin91}.

\begin{thm} \label{thm:Martin_stp}
For every connected graph $G = (V,E)$,  $\xc(\stp(G)) = O(|V| \cdot |E|)$.
\end{thm}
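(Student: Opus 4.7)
The plan is to build a compact multi-commodity flow extended formulation in the spirit of Wong and Martin. First I would fix an arbitrary root $r \in V$ and replace each edge of $G$ by a pair of oppositely oriented arcs, producing a digraph $D = (V, A)$ with $|A| = 2|E|$. The guiding observation is that spanning trees of $G$ correspond bijectively to spanning $r$-arborescences of $D$, so on any such tree one can simultaneously route one unit of flow from $r$ to every other vertex along the unique $r$-to-$t$ path in the tree.

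For each non-root vertex $t \in V \setminus \{r\}$ I would introduce flow variables $y^t_a \geqslant 0$ for $a \in A$ and impose flow conservation at every $v \in V \setminus \{r,t\}$, net unit inflow at $t$, the coupling $y^t_{(u,v)} + y^t_{(v,u)} \leqslant x_e$ for every edge $e = \{u,v\}$, and $\sum_{e \in E} x_e = |V|-1$. Since equalities do not contribute to the size, the number of inequalities is on the order of $(|V|-1)\cdot 3|E| = O(|V|\cdot|E|)$, matching the claimed bound.

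The easy inclusion $\stp(G) \subseteq \operatorname{proj}_x$ follows by lifting each spanning tree $T$: set $y^t_a = 1$ on the arcs of the oriented $r$-to-$t$ path in $T$ and $0$ otherwise, then invoke convexity. For the reverse inclusion, I would apply LP duality (max-flow/min-cut) to project out the $y$-variables, which leaves precisely the cut inequalities $x(\delta_G(S)) \geqslant 1$ for every $\emptyset \neq S \subsetneq V$, together with $\sum_e x_e = |V|-1$ and $x \geqslant 0$. The main non-routine step is then invoking the classical fact that this cut-based system defines $\stp(G)$ (equivalent, via uncrossing, to Edmonds' subtour-elimination description of the graphic matroid base polytope); once that is in hand, the projection argument and the size count are immediate.
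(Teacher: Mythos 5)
There is a genuine gap, and it sits exactly at the step you flag as "the main non-routine step." Projecting out your per-terminal flow variables $y^t$ does, as you say, leave the undirected cut system $\{x \geqslant 0 : x(E) = |V|-1,\ x(\delta_G(S)) \geqslant 1 \text{ for all } \emptyset \neq S \subsetneq V\}$. But this system does \emph{not} define $\stp(G)$, and it is not equivalent to Edmonds' subtour-elimination description via uncrossing. The subtour inequalities imply the cut inequalities (since $x(\delta(S)) = x(E) - x(E(S)) - x(E(V\setminus S))$), but the converse fails. Concretely, take $G = K_4$ with vertices $1,2,3,4$ and the point $x_{12} = 1.2$, $x_{13} = x_{24} = x_{34} = 0.6$, $x_{14} = x_{23} = 0$. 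Then $x(E) = 3$ and every proper non-empty $S$ has $x(\delta(S)) \geqslant 1.2$, so this point lies in your projected polytope; yet $x_{12} > 1 = |\{1,2\}|-1$, so it is not a convex combination of spanning trees. Your formulation therefore describes a polytope strictly containing $\stp(K_4)$, and the reverse inclusion cannot be completed as written.

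The missing idea is that the commodities must share a common \emph{orientation} of the tree: because each commodity $t$ sees only the undirected capacity $y^t_{(u,v)} + y^t_{(v,u)} \leqslant x_e$, different commodities may traverse the same edge in inconsistent directions, which is precisely what the fractional point above exploits. Two standard repairs, both still of size $O(|V|\cdot|E|)$: (i) introduce a single set of arc variables $z_a \geqslant 0$ with $z_{(u,v)} + z_{(v,u)} = x_{uv}$, $\sum_{a \in \delta^-(v)} z_a = 1$ for $v \neq r$ and $\sum_{a \in \delta^-(r)} z_a = 0$, and bound each flow by $z_a$ rather than by $x_e$; projecting out the flows then yields the directed cut description of the $r$-arborescence polytope (Edmonds/Fulkerson), whose image under $x_{uv} = z_{(u,v)} + z_{(v,u)}$ is exactly $\stp(G)$; or (ii) Martin's formulation, which dispenses with flows: variables $z_{k,i,j} \geqslant 0$ for each $k \in V$ and each orientation $(i,j)$ of each edge, with $z_{k,i,j} + z_{k,j,i} = x_{ij}$, $\sum_{j} z_{k,i,j} \leqslant 1$ for $i \neq k$, and $\sum_j z_{k,k,j} = 0$, from which one derives the subtour inequality directly: for $k \in S$, $x(E(S)) = \sum_{ij \in E(S)} (z_{k,i,j} + z_{k,j,i}) \leqslant \sum_{i \in S\setminus\{k\}} \sum_j z_{k,i,j} \leqslant |S| - 1$. (The paper itself only cites Wong and Martin for this theorem, so there is no in-text proof to compare against; but as written your formulation does not establish the claimed bound.)
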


For planar graphs, a linear bound was proved by Williams~\cite{Williams2002}.

\begin{thm} \label{thm:Williams}
For every connected planar graph $G = (V,E)$,  $\xc(\stp(G)) = O(|V|)$.
\end{thm}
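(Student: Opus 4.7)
\textbf{Proof plan for Theorem~\ref{thm:Williams}.} I would follow Williams' approach, which exploits planar duality to replace the $O(|V|^2)$ Martin formulation by one of size $O(|V|)$. The starting point is the classical fact that, under the natural identification $E(G) \leftrightarrow E(G^*)$, a subset $T \subseteq E$ is a spanning tree of $G$ if and only if $E \setminus T$ is a spanning tree of the planar dual $G^*$. Consequently $\stp(G)$ and $\stp(G^*)$ are affinely isomorphic via the map $x \mapsto \mathbf{1} - x$, and therefore have the same extension complexity. Fix a planar embedding of $G$ and let $F = V(G^*)$; by Euler's formula, $|F| = |E| - |V| + 2 \leqslant 2|V| - 4$, and since $G$ is simple and planar, $|E| = O(|V|)$ as well. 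In particular, both $G$ and $G^*$ have $O(|V|)$ vertices and edges, so any formulation whose size is polynomial in $|V| + |F| + |E|$ is in fact $O(|V|)$.

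The core step is to write down a compact extended formulation using \emph{face potentials}. Pick a root face $f_0$ and introduce a variable $y_f \in \R$ for each face $f \in F$, with $y_{f_0} = 0$. For each edge $e \in E$, with left face $f_L(e)$ and right face $f_R(e)$ (with respect to some fixed orientation), relate the edge indicator $x_e$ to the potential difference $y_{f_L(e)} - y_{f_R(e)}$ via suitable linear inequalities, then add the cardinality constraint $\sum_{e \in E} x_e = |V|-1$ and box constraints $0 \leqslant x_e \leqslant 1$. The intuition: for any spanning tree $T$ of $G$, the complement $T^\star := E \setminus T$ is a spanning tree of $G^*$, and taking $y_f$ to be the depth of $f$ in $T^\star$ rooted at $f_0$ provides a natural feasible lift of the incidence vector of $T$. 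The total number of inequalities is proportional to $|V| + |F| + |E| = O(|V|)$, as required.

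The main obstacle will be proving correctness, namely that every extreme point of this face-potential polyhedron projects onto the characteristic vector of a spanning tree of $G$. A clean route is to display the constraint matrix as a network matrix on $G^*$, so that total unimodularity forces integrality of the extreme points; one then argues combinatorially that the integral feasible points are exactly the incidence vectors of spanning trees. This is precisely where planarity is essential: the duality $T \leftrightarrow E \setminus T$ is what lets per-face potential variables replace the per-vertex flow variables of Martin's formulation, collapsing the $O(|V|)$ factor incurred by Theorem~\ref{thm:Martin_stp}.
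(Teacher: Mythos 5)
The paper does not actually prove Theorem~\ref{thm:Williams}; it is imported as a black box from Williams~\cite{Williams2002}, so there is no in-paper argument to compare yours against. Judged on its own, your first paragraph is fine (the complementation bijection $T \leftrightarrow E \setminus T$ between spanning trees of $G$ and of $G^*$, the affine isomorphism $x \mapsto \mathbf{1}-x$, and the count $|F| = |E|-|V|+2 = O(|V|)$ are all correct), but the core of the proof is missing, and the specific mechanism you propose does not work as described. The problem is that the ``depth'' lift does not interact linearly with the edge variables: if $y_f$ is the depth of $f$ in the dual tree $T^\star$ rooted at $f_0$, then for $e \notin T$ (so $e \in T^\star$) the difference $y_{f_L(e)} - y_{f_R(e)}$ equals $+1$ \emph{or} $-1$ --- a disjunctive, non-convex condition --- while for $e \in T$ the difference is completely unconstrained. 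You never exhibit the ``suitable linear inequalities,'' and the natural candidates fail: a constraint of the shape ``$|y_{f_L(e)} - y_{f_R(e)}| \geqslant 1 - x_e$'' is not linear, and even granting it, it does not force the edges with $x_e = 0$ to form a dual tree, since a closed walk of faces can carry potentials $0,1,0,1,\dots$ with every consecutive difference equal to $1$. Separately, your integrality argument is shaky: appending the cardinality row $\sum_{e} x_e = |V|-1$ to a network matrix does not in general preserve total unimodularity, so TU of the face--edge incidence part of the constraint matrix does not by itself give integrality of the whole system.

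For the record, Williams' actual formulation is structured differently: it introduces, for each edge, directed indicator variables for the primal edge (oriented toward a root vertex $r$) \emph{and} for the dual edge (oriented toward a root face $f_0$), imposes that every non-root vertex and every non-root face has out-degree exactly one, and imposes for each edge that exactly one of its four directed copies (two primal, two dual) is selected. Correctness of the integer points then rests on an Euler-formula counting argument showing that two complementary spanning functional graphs, one in $G$ and one in $G^*$, cannot contain cycles; this counting step, not a potential/TU argument, is where planarity does the work and is the part your sketch would need to supply. The high-level moral of your plan (duality collapses Martin's per-vertex flows into a single primal--dual structure of size $O(|V|)$) is the right one, but as written the proof has a genuine gap at its central step.
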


Let $\mathcal{S}$ be a surface.  By the classification theorem for surfaces, $\mathcal{S}$ is homeomorphic to a sphere with $g$ handles, or a sphere with $g$ crosscaps, for some $g$.  We call $g$ the \emph{genus} of $\mathcal{S}$.  Our main result is an improvement of Theorem \ref{thm:Martin_stp} for graphs embedded in a surface of genus $g$.  

\begin{thm} \label{thm:main}
For every connected graph $G = (V,E)$ embedded in a surface of genus $g$,  
$\xc(\stp(G)) = O(g^{1/2} |V|^{3/2} + g^{3/2} |V|^{1/2})$. 
In particular, $\xc(\stp(G)) = O(|V|^{3/2})$ if $g$ is fixed. 
\end{thm}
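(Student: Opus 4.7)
The plan is to exploit Theorem~\ref{thm:Williams} via a recursive decomposition of $G$ along small separators. By the Gilbert--Hutchinson--Tarjan theorem, any graph embedded in a surface of genus $g$ has a balanced vertex separator $S$ with $|S|=O(\sqrt{g|V|})$: the set $V\setminus S$ splits into $V_1,V_2$ with no edges between them and $|V_i|\le \frac{2}{3}|V|$. Writing $G=G_1\cup G_2$ where $G_i$ is induced on $V_i\cup S$ (so $V(G_1)\cap V(G_2)=S$ and $E(G_1),E(G_2)$ partition $E$), the first step will be to prove a gluing lemma of the form
\[
\xc(\stp(G))\le \xc(\stp(G_1))+\xc(\stp(G_2))+O(|S|\cdot |V|),
\]
and then to apply it recursively.

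The gluing lemma is the crux of the argument. A spanning tree of $G$ restricts to a spanning forest in each $G_i$, and conversely a pair $(F_1,F_2)$ of such spanning forests glues to a spanning tree of $G$ if and only if the partitions of $S$ induced by the components of $F_1$ and $F_2$ are jointly consistent with $G$ being a single tree. A naive description of this condition enumerates the $|S|^{\Omega(|S|)}$ set partitions of $S$, which is exponentially too large. Instead I would encode the condition compactly using Wong/Martin-style flow variables: fix a root $r\in S$ and, for every vertex $v\in V$ and every $s\in S$, introduce an auxiliary variable tracking whether $v$ lies in the component of $s$ within the spanning forest of its side. This should contribute $O(|S|\cdot |V|)$ additional variables and constraints and should force the projection back to $\R^E$ to coincide with $\stp(G)$.

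Applying the gluing lemma recursively---and terminating on subproblems small enough that the bound of Theorem~\ref{thm:Martin_stp} is cheap, or on planar subproblems where Theorem~\ref{thm:Williams} applies---the claimed bound $O(g^{1/2}|V|^{3/2}+g^{3/2}|V|^{1/2})$ follows from a standard geometric-series accounting: at level $i$ of the recursion the subproblem has size $\Theta((2/3)^i|V|)$ and separator size $O(\sqrt{g(2/3)^i|V|})$, so the dominant term $g^{1/2}|V|^{3/2}$ arises from the gluing overhead summed over all levels when $g\le |V|$, while the secondary term $g^{3/2}|V|^{1/2}$ arises in the regime where the subproblems have shrunk to size comparable to $g$ and the recursion bottoms out into small pieces handled by Theorem~\ref{thm:Martin_stp}.

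The hard part will be the gluing lemma itself: one must show that $O(|S|\cdot|V|)$ flow-type variables suffice to couple the acyclicity constraints inherited from $\stp(G_1)$ and $\stp(G_2)$ with the global connectivity constraint across $S$, and verify that the projection of the combined extended formulation onto $\R^E$ is exactly $\stp(G)$ with no spurious fractional points. The supporting separator-theoretic input and the recurrence calculation are then essentially routine, so the entire bound hinges on making this compact encoding of ``$(F_1,F_2)$ glues to a spanning tree'' both small and exact.
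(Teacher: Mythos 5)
Your overall strategy (balanced separators plus recursion) is genuinely different from the paper's, but as written it has a real gap: everything rests on the ``gluing lemma,'' and that lemma is precisely the part you do not prove --- and it is far from clear that it is true in the form you need. Two concrete problems. First, the combinatorial condition for $(F_1,F_2)$ to glue into a spanning tree is that every component of each $F_i$ meets $S$ and that the bipartite incidence structure between the components of $F_1$ and $F_2$ over $S$ forms a tree; the set of admissible pairs of partitions of $S$ has size $|S|^{\Omega(|S|)}$, and you propose to capture its convex hull with $O(|S|\cdot|V|)$ flow-type variables without exhibiting the constraints or arguing integrality. Second, and more fundamentally, even if you had exact extended formulations for $\stp(G_1)$ and $\stp(G_2)$, coupling them through shared variables and linear consistency constraints does not in general yield the convex hull of the glued combinatorial objects: the projection typically contains spurious fractional points, and ruling them out is exactly where Martin-type formulations pay their $O(|V|\cdot|E|)$ price. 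So the assertion that the projection ``is exactly $\stp(G)$'' is the entire theorem, not a step of its proof, and the claimed bound does not follow.

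For contrast, the paper avoids gluing and recursion entirely. It first passes from $\stp(G)$ to the non-empty subgraph polytope $\nesubp(G)$ via Theorem~\ref{thm:equiv}, at a cost of only $\Theta(|E|)$. It then takes $X$ to be a \emph{planarizing} set of $O(\sqrt{g|V|})$ vertices (Theorem~\ref{thm:planarization}) rather than a balanced separator, and applies a single Balas union (Lemma~\ref{lem:deletion}): a non-empty subgraph either avoids $X$, in which case it lives in the planar graph $G-X$ and Theorems~\ref{thm:Williams} and~\ref{thm:equiv} give an $O(|V|)$ bound, or it contains some $v\in X$, in which case the corresponding face of $\subp(G)$ has an $O(|E|)$-size description outright. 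Balas' theorem preserves exactness for free, so no integrality argument is needed, and with $|E|=O(|V|+g)$ the bound $O(g^{1/2}|V|^{3/2}+g^{3/2}|V|^{1/2})$ drops out in one step. If you want to salvage your approach, you would need either to actually prove the gluing lemma (which would be a substantial new result about composing extended formulations across large separators) or to reroute through the subgraph polytope, where the union structure is simple enough for Balas' theorem to apply.
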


This gives an improvement over Theorem~\ref{thm:Martin_stp} for all fixed $g$. For instance, for toroidal graphs we obtain a $O(|V|^{3/2})$-size extended formulation, while the previously known extended formulations are of size $\Omega(|V|^2)$.


For other polytopes, smaller extended formulations have also been obtained when restricting to graphs of bounded genus.  For example, Gerards~\cite{Gerards91} proved that the perfect matching polytope has a polynomial-size extended formulation for graphs embedded in a fixed genus surface.  This is in stark contrast to the situation for general graphs: Rothvoß~\cite{Rothvoss14} showed that the perfect matching polytopes of complete graphs have exponential extension complexity. 

Going back to the spanning tree polytope, we conjecture that the bound in Theorem~\ref{thm:main} can be improved to match the corresponding bound for planar graphs.  

\begin{conj}
If $G=(V,E)$ is a connected graph embedded in a fixed  surface, then $\xc(\stp(G)) = O(|V|)$.
\end{conj}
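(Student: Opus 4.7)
The plan is to extend Williams' linear-size planar construction (Theorem~\ref{thm:Williams}) to bounded-genus graphs by reducing to a planar subgraph. Since a graph $G$ embedded in a genus-$g$ surface can be made planar by cutting the surface along $O(g)$ non-contractible simple closed curves, the first attempt is: find a set $F \subseteq E$ with $G - F$ planar, enumerate over partitions $(A, F \setminus A)$ of $F$ into ``in-tree'' and ``out-of-tree'' edges (keeping only those with $A$ acyclic in $G$), apply Theorem~\ref{thm:Williams} to the planar minor $G/A - (F \setminus A)$ for each partition, and take the convex hull of the resulting polytopes via Balas' disjunctive programming. This would give extension complexity roughly $2^{|F|}\cdot O(|V|)$.

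The immediate issue is that the abstract skewness of $G$ (the minimum size of a planarizing edge set) need not be bounded by a function of $g$ alone: even for $g = 1$, densely-embedded toroidal graphs such as the $n\times n$ toroidal grid have skewness $\Omega(\sqrt{|V|})$. So naive enumeration incurs an unacceptable blow-up, and the $O(g)$ ``non-planar directions'' of the surface must be handled by polynomially many continuous variables rather than by case analysis. A natural refinement is to take $F$ to consist of the edges along a canonical cut-system of $2g$ non-contractible cycles, and for each of the $O(g)$ resulting boundary arcs to introduce auxiliary variables encoding the connectivity pattern induced on that arc by the candidate spanning tree, glued to a Williams-type formulation on the planarization $G - F$.

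The main obstacle is carrying out the gluing in only $O(|V|)$ total constraints. Spanning trees of $G$ correspond to spanning forests of $G - F$ together with $F$-edges whose addition creates neither cycle nor disconnection; the acyclicity test across $F$ is intrinsically global, since two $F$-edges on the same handle interact differently from two $F$-edges on different handles. Classical acyclicity and connectivity certificates (subtour elimination, flow-based reductions) cost $\Omega(|V|)$ per handle, and so $\Omega(g|V|)$ in total, essentially recovering the Wong--Martin bound. Surmounting this seems to require a genus-$g$ analogue at the polyhedral level of the planar duality underlying Theorem~\ref{thm:Williams}, one that encodes the first-homology structure of the surface in $O(g)$ auxiliary variables rather than $O(g|V|)$; whether such an encoding exists is, in my reading, the essential content of the conjecture.
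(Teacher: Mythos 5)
There is no proof here to check: the statement you are addressing is stated in the paper as a \emph{conjecture}, the paper itself offers no proof of it, and your text does not supply one either. What you have written is a plan followed by an accurate diagnosis of why the plan fails. Your first attempt (enumerate the $2^{|F|}$ trace patterns on a planarizing edge set $F$ and apply Theorem~\ref{thm:Balas}) is correctly abandoned, since the skewness of a bounded-genus graph is not bounded in terms of the genus alone. Your second attempt (cut along a canonical curve system and glue Williams-type formulations across the boundary arcs) is never actually carried out: the gluing step, which is where the entire difficulty lives, is left as ``introduce auxiliary variables encoding the connectivity pattern,'' and you then concede that every encoding you know costs $\Omega(|V|)$ per handle. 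A proposal whose final sentence is that the missing step ``is the essential content of the conjecture'' has not proved the conjecture; it has restated it.

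For context, what the paper actually establishes in this direction is Theorem~\ref{thm:main}, the weaker bound $\xc(\stp(G)) = O(g^{1/2}|V|^{3/2} + g^{3/2}|V|^{1/2})$, and it gets there by a route different from either of your sketches: it deletes a planarizing set of \emph{vertices} $X$ with $|X| = O(\sqrt{g|V|})$ (Theorem~\ref{thm:planarization}, Djidjev--Venkatesan), passes to the non-empty subgraph polytope via $\xc(\stp(G)) = \xc(\nesubp(G)) + \Theta(|E|)$ (Theorem~\ref{thm:equiv}), and applies Balas' union to $\nesubp(G-X)$ together with the $|X|$ faces $\subp(G) \cap \{(x,y) \mid x_v = 1\}$, each of which has only $O(|E|)$ facets (Lemma~\ref{lem:deletion}). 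This avoids both the exponential enumeration and the per-handle connectivity certificates, but for fixed $g$ it still only yields $O(|V|^{3/2})$, not the conjectured $O(|V|)$. If you want to make progress on the conjecture itself, the honest starting point is the one you end on: a polyhedral analogue of planar duality that handles the $O(g)$-dimensional homology of the surface with $O(1)$ overhead per handle, which remains open.
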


Indeed, the same bound may even hold more generally for proper minor-closed families of graphs.  

\begin{conj}
If $\mathcal{C}$ is a proper minor-closed family of graphs and $G=(V,E)$ is a connected graph in $\mathcal{C}$, then $\xc(\stp(G)) = O(|V|)$.
\end{conj}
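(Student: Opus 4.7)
The plan is to reduce to the bounded-genus case via the Robertson--Seymour Graph Minor Structure Theorem (GMST), and then to prove a strengthening of Theorem~\ref{thm:main} that gives a linear-size extended formulation already in bounded genus. By the GMST, every $G \in \mathcal{C}$ admits a tree decomposition of bounded adhesion in which each torso is, after deleting a bounded set of apex vertices, embeddable in a bounded-genus surface with a bounded number of vortices of bounded depth, where all four constants depend only on $\mathcal{C}$. Also, every graph in $\mathcal{C}$ is sparse ($|E(G)| = O(|V(G)|)$), which will be crucial to control sums of per-torso sizes.

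\textbf{Combination via clique-sums.}
First I would establish a clique-sum lemma: if $G = G_1 \oplus_K G_2$ is obtained by identifying $G_1$ and $G_2$ along a clique $K$ of size at most $k$, then
$$
\xc(\stp(G)) \leq \xc(\stp(G_1)) + \xc(\stp(G_2)) + f(k)
$$
for some function $f$. The intuition is that any spanning tree of $G$ restricts to a spanning forest on each side whose components induce a partition of $K$; one can ``guess'' this partition using $O(f(k))$ extra variables and impose consistency constraints between the two pieces (a matroid-intersection style argument on $K$). Iterating along the GMST tree decomposition yields
$$
\xc(\stp(G)) \;\leq\; \sum_{t} \xc(\stp(H_t)) \;+\; O(|V|),
$$
where the $H_t$ are the torsos, so it suffices to prove the linear bound for each almost-embeddable torso.

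\textbf{Reducing to bounded genus.}
For a single almost-embeddable torso $H$, the bounded apex set can be absorbed by a bounded number of additional clique-sums, and each vortex of bounded depth has bounded pathwidth, hence extension complexity $O(|V(H)|)$ via a standard Martin-style flow formulation. What remains is to prove $\xc(\stp(G)) = O(|V|)$ for graphs $G$ genuinely embedded in a surface of genus $g = O(1)$. A natural route is to iteratively cut the surface along $O(g)$ short non-contractible curves, reducing to a planar instance with $O(g)$ boundary identifications, then apply Williams' construction (Theorem~\ref{thm:Williams}) and pay an additive $O(g)$ for consistency constraints across the cuts.

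\textbf{Main obstacle.}
The genuine difficulty is the base case: the linear bound for bounded genus is itself an unresolved conjecture (strictly stronger than Theorem~\ref{thm:main}). Williams' planar construction relies on planar duality, which fails in higher genus because non-contractible cycles have no dual cocycle; the formulation must therefore incorporate homological bookkeeping of spanning trees relative to the non-contractible cuts, which is a genuinely new ingredient compared to~\cite{Williams2002} and compared to the separator recursion behind Theorem~\ref{thm:main}. A secondary technical point is to keep the overhead $f(k)$ of the clique-sum lemma bounded in $k$ alone and independent of the torso sizes, since otherwise the accumulation across the GMST tree would destroy linearity.
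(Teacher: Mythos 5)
The statement you are asked about is a \emph{conjecture}: the paper offers no proof of it, only partial evidence (the bounded-treewidth case via~\cite{KolmanKT16} and the $k$-apex case, Theorem~\ref{thm:kapex}). So there is no proof in the paper to compare against, and the question is only whether your proposal closes the gap. It does not, and you say so yourself: your base case --- a \emph{linear}-size extended formulation for $\stp(G)$ when $G$ has bounded genus --- is exactly the paper's first conjecture, which is open. Theorem~\ref{thm:main} only gives $O(|V|^{3/2})$ for fixed genus, and its method (delete an $O(\sqrt{g|V|})$-vertex planarizing set, pay $O(|E|)$ per deleted vertex via Lemma~\ref{lem:deletion} and Balas) structurally cannot reach $O(|V|)$. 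Your suggestion of cutting along $O(g)$ non-contractible curves and paying ``an additive $O(g)$ for consistency'' is not an argument: cutting the surface duplicates the vertices on the cut curves, and a spanning tree of $G$ does not correspond to a spanning tree of the cut-open planar graph in any way that Williams' dual-flow formulation~\cite{Williams2002} is known to accommodate. Until that base case is resolved, the reduction has nothing to reduce to, and the whole plan is a research programme rather than a proof.

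A second genuine gap is the clique-sum lemma itself. Extension complexity is not known to be additive under clique-sums for the spanning tree polytope: Balas' theorem (Theorem~\ref{thm:Balas}) handles \emph{unions} of polytopes, whereas gluing two pieces along a clique $K$ requires a \emph{join}-type composition in which each side's formulation exposes the partition of $K$ induced by the components of the restricted forest, together with a consistency condition ensuring the two partitions merge into a tree rather than creating a cycle. To make the induction go through you need a strengthened inductive hypothesis --- a linear-size extended formulation of a lifted polytope carrying the partition variables --- and you must verify that guessing one of the $B_k$ partitions per torso via disjunction costs only an additive constant per bag rather than a multiplicative factor that compounds along the $\Theta(|V|)$ nodes of the decomposition. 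This is plausibly doable (it is in the spirit of~\cite{KolmanKT16}), but it is not something you can assert with a one-line appeal to ``matroid-intersection style'' consistency. Finally, note that the torsos of the Graph Minor Structure Theorem need not lie in $\mathcal{C}$, so the per-torso bound must be proved for almost-embeddable graphs directly, as you implicitly acknowledge; the apex and vortex parts are indeed the easier ingredients, but the surface part again runs into the open base case.
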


We remark that this conjecture is known to hold if the graphs in $\mathcal{C}$ have bounded treewidth~\cite{KolmanKT16}. 
To provide some additional support for the conjecture, we observe that it is also true when the graphs in $\mathcal{C}$ are $k$-apex for some fixed $k$. 
Recall that a graph $G=(V,E)$ is \emph{$k$-apex} if there is a set $X \subseteq V$ with $|X| \leqslant k$ such that $G-X$ is planar. It is easily checked that the set of $k$-apex graphs is a proper minor-closed family of graphs.

\begin{thm} \label{thm:kapex}
Let $G = (V,E)$ be a connected $k$-apex graph.  Then $\xc(\stp(G)) = O(k \cdot |E|) = O(k^2 \cdot |V|)$.
\end{thm}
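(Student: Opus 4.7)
The identity $O(k|E|) = O(k^2|V|)$ in the statement follows from the edge count in a $k$-apex graph. Fix an apex set $X \subseteq V$ with $|X|\leqslant k$ so that $G-X$ is planar; Euler's formula applied to $G-X$ gives $|E(G-X)| \leqslant 3(|V|-k)-6$, while the remaining edges are incident to $X$ and number at most $k(|V|-k) + \binom{k}{2}$. Hence $|E|=O(k|V|)$ and the two bounds in the theorem coincide.

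For the main bound $\xc(\stp(G)) = O(k|E|)$, my plan is to combine the flow-based formulation of Wong/Martin (Theorem~\ref{thm:Martin_stp}) with the planar formulation of Williams (Theorem~\ref{thm:Williams}), paying a flow only for the $k$ apex vertices rather than for all $|V|-1$ non-root vertices. Root the tree at some $r$: if $X=\emptyset$ we are done by Theorem~\ref{thm:Williams}, so assume $r\in X$. For each of the $k-1$ non-root apex vertices $a \in X\setminus\{r\}$, introduce the standard Wong-style subsystem of a unit flow $f^a$ from $a$ to $r$ supported on the tree; this is $O(|E|)$ variables and constraints per apex, hence $O(k|E|)$ overall, and certifies that the apex vertices are connected to $r$ in the tree.

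The remaining task is to certify connectivity of the non-apex vertices. In any spanning tree $T$, the restriction $F := T\cap E(G-X)$ is a spanning forest of the planar graph $G-X$, and each component of $F$ must be attached to the apex skeleton by exactly one edge of $T\cap \delta(X)$. I would describe the convex hull of such ``spanning forest plus attachments'' configurations with only $O(|V|)$ additional constraints by leveraging Williams' planar formulation: after adjoining variables that encode, for each component of $F$, the identity of its unique attachment edge into $X$, the resulting polytope inherits essentially the planar-dual description used in Theorem~\ref{thm:Williams}. Combining this $O(|V|)$-sized planar block with the $O(k|E|)$ apex flows gives a total of $O(k|E|)$, and one checks that the projection onto the $x$-variables recovers exactly $\stp(G)$.

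The main obstacle, as so often with this kind of extended formulation, is carrying out the planar-dual step on the ``forest + attachments'' object cleanly: Williams' theorem as stated produces a spanning tree, not a spanning forest with a prescribed $k$-sided boundary condition, and the attachment edges at $X$ may themselves be non-planar relative to $G-X$. I would handle this by breaking the boundary apex by apex: for each $a\in X$, separately describe the union of components of $F$ that attach through $a$ by an $O(|V|+|\delta(a)|)$-sized subsystem obtained from Theorem~\ref{thm:Williams} applied to $G-X$ augmented by a single ``portal'' vertex playing the role of $a$, and couple these subsystems to the apex flows $f^a$ through $O(1)$ consistency constraints per edge of $\delta(a)$. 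Summed over the $k$ apices this contributes $O(k|V|+|E|)$ further constraints, well within the target $O(k|E|)$.
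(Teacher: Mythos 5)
There is a genuine gap here, on two levels. First, your structural claim that in a spanning tree $T$ ``each component of $F := T \cap E(G-X)$ must be attached to the apex skeleton by exactly one edge of $T \cap \delta(X)$'' is false once $k \geqslant 2$: if $a_1, a_2 \in X$ and $v \notin X$, the path $a_1 v a_2$ can occur in $T$, so the component $\{v\}$ of $T - X$ attaches via two edges of $\delta(X)$. In general, contracting each component of $T-X$ turns $T$ into a tree on $X$ together with the contracted nodes, and those nodes may have arbitrary degree; the attachment pattern is therefore a genuinely global combinatorial object, not a per-component choice of a single edge. Second, and more importantly, the step that carries all the weight --- describing the convex hull of ``spanning forest of $G-X$ plus attachments'' with $O(|V|)$ constraints ``by leveraging Williams' planar formulation'' --- is not carried out, and you yourself flag it as the main obstacle. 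Theorem~\ref{thm:Williams} is a statement about the spanning tree polytope of a planar graph; it does not supply a description of spanning-forest polytopes with boundary conditions, nor is it clear that your per-apex ``portal vertex'' subsystems, coupled by consistency constraints, project to the correct polytope rather than to a relaxation containing fractional points (the partition of forest components among the apices they attach to is itself a disjunction, and gluing LP relaxations of disjunctive pieces without a Balas-type construction typically loses integrality). As written, the argument is a plan with its hardest step unproved.

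For contrast, the paper avoids all of this by never working with spanning trees directly: Theorem~\ref{thm:equiv} (Conforti--Kaibel--Walter--Weltge) reduces $\xc(\stp(G))$ to $\xc(\nesubp(G))$ up to $\Theta(|E|)$, and the non-empty subgraph polytope behaves perfectly under vertex deletion --- Lemma~\ref{lem:deletion}, a one-line application of Balas' union (Theorem~\ref{thm:Balas}), gives $\xc(\nesubp(G)) \leqslant \xc(\nesubp(G-X)) + O(|X| \cdot |E|)$, after which Theorems~\ref{thm:equiv} and~\ref{thm:Williams} applied to the planar graph $G-X$ finish the proof. Your edge-count argument for $|E| = O(k|V|)$, hence $O(k|E|) = O(k^2|V|)$, is correct and matches the paper's. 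If you want to salvage your approach, the cleanest fix is to replace the forest-plus-attachments analysis by this subgraph-polytope reduction; handling the attachment combinatorics directly in the spanning tree world appears substantially harder than the problem requires.
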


\section{The Proofs}

In this section we prove Theorems \ref{thm:main} and \ref{thm:kapex}. We first gather the necessary ingredients.

As before, let $G = (V,E)$ be a connected graph. The \emph{subgraph polytope} of $G$ is defined as $\subp(G) = \conv \{(\chi^S,\chi^F) \in \{0,1\}^{V} \times \{0,1\}^{E} \mid S \subseteq V,\ F \subseteq E(S)\}$, where $E(S)$ denotes the set of edges of $G$ with both endpoints in $S$. 
It is easy to verify using total unimodularity that $\subp(G) = \{(x,y) \in \R^V \times \R^E \mid \forall v,w \in V \text{ with } vw \in E : 0 \leqslant y_{vw} \leqslant x_v \leqslant 1\}$. Hence, the subgraph polytope has at most $3 |E| + |V|$ facets, and in particular $\xc(\subp(G)) = O(|E|)$.

We will mostly be interested in the variant of the subgraph polytope known as the \emph{non-empty subgraph polytope}, defined as $\nesubp(G) = \conv \{(\chi^S,\chi^F) \in \{0,1\}^{V} \times \{0,1\}^{E} \mid \emptyset \subsetneq S \subseteq V,\ F \subseteq E(S)\}$. Notice that $\nesubp(G)$ is nothing else than the convex hull of the vertices of $\subp(G)$ distinct from the origin $(\mathbf{0}^V,\mathbf{0}^E)$. 

The non-empty subgraph polytope turns out to be tightly connected to the spanning tree polytope: Conforti, Kaibel, Walter and Weltge \cite{ConfortiKWW15} proved that the extension complexities of the two polytopes are essentially equal.

\begin{thm}
\label{thm:equiv}
For every connected graph $G = (V,E)$,  $\xc(\stp(G)) = \xc(\nesubp(G)) + \Theta(|E|)$.
\end{thm}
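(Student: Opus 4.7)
The plan is to prove two matching inequalities $\xc(\stp(G)) \leqslant \xc(\nesubp(G)) + O(|E|)$ and $\xc(\nesubp(G)) \leqslant \xc(\stp(G)) + O(|E|)$, by exhibiting explicit transformations between extended formulations with linear additive overhead in each direction.

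The combinatorial bridge will be an apex-vertex construction: form $G^+$ from $G$ by adjoining a new vertex $r$ joined to every $v \in V$, so that each spanning tree $T$ of $G^+$ decomposes uniquely as $T = F \cup \{rv : v \in R\}$, where $F = T \cap E$ is a spanning forest of $G$ and $R \subseteq V$ is a non-empty set containing exactly one ``root'' per connected component of $F$. This correspondence sets up a tight relationship between $\stp(G^+)$, the forest polytope $P_{\mathrm{forest}}(G)$, and $\nesubp(G)$. For the direction $\xc(\nesubp(G)) \leqslant \xc(\stp(G)) + O(|E|)$, I would first show that $\xc(\stp(G^+)) \leqslant \xc(\stp(G)) + O(|V|)$ via a small explicit extended formulation handling the apex-incident edges. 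I would then realise $\nesubp(G)$ as an affine image of $\stp(G^+)$ under the map that sends the indicator of a spanning tree $T$ to coordinates $x_v := 1 - \chi^T_{rv}$ (marking non-root vertices of $V$) and $y_e := \chi^T_e$. The image lies in $\nesubp(G)$, and conversely every vertex $(\chi^S, \chi^F)$ of $\nesubp(G)$ should arise after imposing the $O(|E|)$ linking inequalities $0 \leqslant y_{vw} \leqslant x_v \leqslant 1$ inherited from the defining system of $\subp(G)$ given in the text.

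For the reverse direction $\xc(\stp(G)) \leqslant \xc(\nesubp(G)) + O(|E|)$, the plan is to exploit that $\stp(G)$ is the face of the forest polytope $P_{\mathrm{forest}}(G)$ cut out by the single equation $x(E) = |V| - 1$, so it is enough to bound $\xc(P_{\mathrm{forest}}(G))$. I would construct an extended formulation for $P_{\mathrm{forest}}(G)$ from one for $\nesubp(G)$ as follows: a forest of $G$ can be recorded as a non-empty subgraph $(S, F)$ whose $F$-part is acyclic, and acyclicity can be enforced through $O(|E|)$ extra inequalities combining the $\nesubp$ description with the rank/cardinality constraints $y(E(S)) \leqslant |S| - 1$ already implicit in the vertex-edge coupling.

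The main obstacle I anticipate is pinning down the first direction: one must verify that the proposed affine projection of $\stp(G^+)$ yields exactly $\nesubp(G)$ rather than a strict relaxation, which requires careful case analysis of how components of the spanning forest $F$ translate to coordinates and how the ``root-choice'' freedom encoded by $R$ interacts with the target $x$-variables. The additive $O(|E|)$ overhead must also be sharp throughout, which likely forces a delicate choice of auxiliary variables to link apex-edge indicators to the vertex indicator $x \in [0,1]^V$ without introducing quadratic-sized blowups.
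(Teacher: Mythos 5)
First, a remark on scope: the paper does not actually prove Theorem~\ref{thm:equiv}; it quotes it from Conforti, Kaibel, Walter and Weltge~\cite{ConfortiKWW15}, so your argument has to be measured against that source. As written, your proposal has genuine gaps in both directions. For $\xc(\nesubp(G)) \leqslant \xc(\stp(G)) + O(|E|)$, the affine map you propose does not land in $\nesubp(G)$: the spanning tree of $G^+$ consisting of all apex edges $\{rv : v \in V\}$ has $F = \emptyset$ and $R = V$, so it is sent to the origin, which is exactly the point excluded from $\nesubp(G)$; more generally, whenever a root $v \in R$ is an endpoint of an edge $e \in F$, your map gives $y_e = 1 > 0 = x_v$, violating the defining inequalities of $\subp(G)$. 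The image also misses every vertex $(\chi^V,\chi^F)$ of $\nesubp(G)$, since $S = V$ would force $R = \emptyset$, which yields no spanning tree of $G^+$. Imposing the inequalities $0 \leqslant y_{vw} \leqslant x_v \leqslant 1$ afterwards cannot repair this: intersecting only shrinks the image, and the image does not contain $\nesubp(G)$ to begin with. On top of this, the auxiliary claim $\xc(\stp(G^+)) \leqslant \xc(\stp(G)) + O(|V|)$ is asserted with no construction; spanning trees of $G^+$ encode arbitrary rooted spanning forests of $G$, a strictly richer structure than spanning trees of $G$, and it is not at all clear that such a cheap reduction exists.

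For the direction $\xc(\stp(G)) \leqslant \xc(\nesubp(G)) + O(|E|)$, passing to the forest polytope $P_{\mathrm{forest}}(G)$ and using that $\stp(G)$ is its face $y(E)=|V|-1$ is fine, but the sentence ``acyclicity can be enforced through $O(|E|)$ extra inequalities'' is essentially the statement of the theorem, not a step towards it: in the original variables the forest polytope requires the exponentially many rank inequalities $y(E(U)) \leqslant |U|-1$, and these are certainly not ``implicit'' in $\nesubp(G)$, whose vertices $(\chi^S,\chi^{E(S)})$ violate them whenever $G[S]$ contains a cycle. The missing idea is a polarity/LP-duality argument: for $y \geqslant 0$ one has $y \in P_{\mathrm{forest}}(G)$ if and only if $\min\{\mathbf{1}^\top x - y^\top z : (x,z) \in \nesubp(G)\} \geqslant 1$, because for fixed $S$ and $y\geqslant 0$ the inner minimum is attained at $F=E(S)$, recovering exactly the rank inequalities. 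Dualizing this inner linear program over an extended formulation of $\nesubp(G)$ turns the condition into $O(|E|)$ additional inequalities in auxiliary multiplier variables, and a symmetric argument handles the converse inequality. Without this duality mechanism, neither bound in the theorem is established by your outline.
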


In particular, it follows from this and Theorem~\ref{thm:Williams} that $\xc(\nesubp(G)) = O(|V|)$ for every connected planar graph $G = (V,E)$. 

Balas' union of polytopes \cite{Balas88} is a basic tool to construct extended formulations. It provides an upper bound on the extension complexity of the convex hull of a union of polytopes.

\begin{thm} \label{thm:Balas}
Let $P_1$, \ldots, $P_k$ be non-empty polytopes in $\R^d$, and let $P = \conv\left(\bigcup_{i=1}^k P_i \right)$. Then $\xc(P) \leqslant \sum_{i=1}^k \max \{1, \xc(P_i)\}$.
\end{thm}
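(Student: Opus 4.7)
My plan is to use Balas' classical disjunctive-programming construction. For each $i$, fix an extended formulation
\[
Q_i = \{(x, y^i) : A_i x + B_i y^i \leqslant b_i,\ C_i x + D_i y^i = c_i\}
\]
of $P_i$ with exactly $\xc(P_i)$ inequalities, chosen so that $Q_i$ itself is a bounded polytope whenever $\xc(P_i) \geqslant 1$ (this is always possible when $P_i$ is non-empty and bounded). Introduce, for each $i$, variables $\tilde x^i \in \R^d$, $\tilde y^i \in \R^{k_i}$, and $\lambda_i \in \R$, which should be thought of as $\lambda_i (x^i, y^i)$ for some $(x^i, y^i) \in Q_i$, together with one common copy $x \in \R^d$.

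The extended formulation of $P$ I would write down is
\begin{align*}
A_i \tilde x^i + B_i \tilde y^i &\leqslant \lambda_i b_i, \quad i \in [k], \\
C_i \tilde x^i + D_i \tilde y^i &= \lambda_i c_i, \quad i \in [k], \\
\sum_{i=1}^k \tilde x^i &= x, \\
\sum_{i=1}^k \lambda_i &= 1,
\end{align*}
augmented by the inequality $\lambda_i \geqslant 0$ for every index $i$ with $\xc(P_i) = 0$ (i.e.\ $P_i$ a single point). Correctness would follow by projecting onto $x$: the inclusion $P \subseteq \{x : \exists \ldots\}$ is immediate by lifting any convex combination $x = \sum \lambda_i p_i$ via points $(p_i, q_i) \in Q_i$ and setting $(\tilde x^i, \tilde y^i) = \lambda_i (p_i, q_i)$. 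For the reverse inclusion, given a feasible solution, set $(x^i, y^i) = (\tilde x^i, \tilde y^i)/\lambda_i$ for $\lambda_i > 0$; boundedness of $P_i$ (trivial recession cone) forces $\tilde x^i = 0$ whenever $\lambda_i = 0$, and $x$ thus becomes a convex combination of the $x^i \in P_i$.

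The main mild subtlety — and the one place the sharper bound $\max\{1,\xc(P_i)\}$ rather than $\xc(P_i) + 1$ comes from — is the handling of the sign constraint on $\lambda_i$. When $Q_i$ is bounded, the homogenization cone cut out by the scaled constraints coincides with $\{\lambda (y, 1) : y \in Q_i,\ \lambda \geqslant 0\}$, so $\lambda_i \geqslant 0$ is already implied and need not be counted. When $\xc(P_i) = 0$, the polytope $Q_i = \{p\}$ is a single point, the homogenized equalities do not imply $\lambda_i \geqslant 0$, and this one inequality must be added explicitly. Either way, each index $i$ contributes at most $\max\{1,\xc(P_i)\}$ inequalities, and summing over $i$ yields the claimed bound.
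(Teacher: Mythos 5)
The paper states Theorem~\ref{thm:Balas} as a known result of Balas and gives no proof of it, so there is nothing internal to compare against; your argument is the standard disjunctive-programming construction underlying Balas' theorem, and it is correct. In particular, your handling of the $\max\{1,\xc(P_i)\}$ term is sound: taking $Q_i$ to be a polytope with exactly $\xc(P_i)\geqslant 1$ facets forces $\dim Q_i\geqslant 1$, and for a bounded $Q_i$ containing two distinct points the homogenized system admits no point with $\lambda_i<0$ (such a point, combined with a point of $Q_i\times\{1\}$, would yield a nonzero vector in the recession cone of $Q_i$), so $\lambda_i\geqslant 0$ is indeed implied and only the single-point case costs an extra inequality.
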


The next observation follows easily from Balas' union of polytopes.

\begin{lem} \label{lem:deletion}
Let $G = (V,E)$ be a connected graph and $X \subseteq V$ be a set of vertices. Then
$$
\nesubp(G) = \conv \left( \nesubp(G-X) \cup \bigcup_{v \in X} (\subp(G) \cap \{(x,y) \in \R^V \times \R^E \mid x_v = 1\}) \right)\,,
$$
thus $\xc(\nesubp(G)) \leqslant \xc(\nesubp(G-X)) + O(|X| \cdot |E|)$.
\end{lem}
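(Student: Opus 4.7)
The plan is to establish the set-theoretic identity by matching up vertices, and then read off the extension complexity bound from Balas' union of polytopes (Theorem~\ref{thm:Balas}). Throughout, I understand $\nesubp(G-X)$ to be embedded in $\R^V \times \R^E$ by padding the coordinates corresponding to $X$ and to edges incident with $X$ with zeros; this is a legitimate identification because any vertex $(\chi^S,\chi^F)$ of $\nesubp(G-X)$ has $S \subseteq V \setminus X$ and $F \subseteq E_{G-X}(S) = E_G(S)$, so its zero-extension is itself a vertex of $\nesubp(G)$.

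For the identity I would check both inclusions at the level of vertices. The inclusion ``$\supseteq$'' is immediate from the above embedding for the first piece, and for each $v \in X$ every vertex $(\chi^S, \chi^F)$ of $\subp(G) \cap \{x_v = 1\}$ must satisfy $v \in S$, so $S \neq \emptyset$ and $(\chi^S, \chi^F)$ is a vertex of $\nesubp(G)$. For ``$\subseteq$'', let $(\chi^S, \chi^F)$ be an arbitrary vertex of $\nesubp(G)$, so $\emptyset \neq S \subseteq V$ and $F \subseteq E(S)$. If $S \cap X = \emptyset$, then $S \subseteq V \setminus X$ and the point lies in the embedded copy of $\nesubp(G-X)$; otherwise pick any $v \in S \cap X$ and note that $\chi^S_v = 1$, placing the point in $\subp(G) \cap \{x_v = 1\}$. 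Every vertex of $\nesubp(G)$ is therefore accounted for in the union, and taking convex hulls gives the claimed equality.

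With the identity in hand, Theorem~\ref{thm:Balas} yields
$$
\xc(\nesubp(G)) \leqslant \xc(\nesubp(G-X)) + \sum_{v \in X} \xc\bigl(\subp(G) \cap \{x_v = 1\}\bigr).
$$
Each summand is $O(|E|)$, because the explicit $O(|E|)$-size outer description of $\subp(G)$ recalled earlier remains an outer description after adding the single equation $x_v = 1$. Summing over $v \in X$ gives the bound $\xc(\nesubp(G-X)) + O(|X| \cdot |E|)$. I do not anticipate any genuine obstacle: the argument is essentially bookkeeping, and the only mildly delicate point is keeping the zero-padding identification straight when comparing the ambient spaces of $\nesubp(G-X)$ and $\nesubp(G)$.
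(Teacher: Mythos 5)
Your proposal is correct and takes essentially the same route as the paper: decompose $\nesubp(G)$ according to whether the vertex set meets $X$, apply Balas' union of polytopes, and bound each piece $\subp(G)\cap\{x_v=1\}$ by $O(|E|)$ using the explicit facet description of $\subp(G)$. You are in fact more detailed than the paper, which asserts the decomposition without the vertex-by-vertex verification and only pauses to note the degenerate cases where $X$ is empty or all of $V$ (so that Balas' theorem, which requires non-empty polytopes, applies cleanly) --- a small point worth keeping in mind but not a gap in your argument.
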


\begin{proof}
We may assume that $X$ is a proper, non-empty subset of $V$, since otherwise the result holds. From Theorem~\ref{thm:Balas},
\begin{align*}
\xc(\nesubp(G)) 
&\leqslant \xc(\nesubp(G-X)) + \sum_{v \in X} \xc(\subp(G))\\
&= \xc(\nesubp(G-X)) + O(|X| \cdot |E|)\,. 
\end{align*}
(Remark: If $|X|=|V|-1$ then $\nesubp(G-X)$ is empty and is thus not part of the list of polytopes we apply Theorem~\ref{thm:Balas} on, as expected.)
\end{proof}

We are now ready to prove Theorem~\ref{thm:kapex}.

\begin{proof}[Proof of Theorem~\ref{thm:kapex}]
Let $G=(V,E)$ be a connected $k$-apex graph, and let $X \subseteq V$ be any set of at most $k \geqslant 1$ vertices whose deletion from $G$ gives a planar graph. By Theorem~\ref{thm:equiv}, Lemma~\ref{lem:deletion} and Theorem~\ref{thm:Williams},
\begin{align*}
\xc(\stp(G)) 
&\leqslant \xc(\nesubp(G)) + O(|E|)\\
&\leqslant \underbrace{\xc(\nesubp(G-X))}_{= O(|V|) = O(|E|)} + O(\underbrace{|X|}_{\leqslant k} \cdot |E|) = O(k \cdot |E|)\,.
\end{align*}
Notice that $|E| \leqslant k \cdot (|V|-1) + 3 (|V|-k) - 6 = O(k \cdot |V|)$, thus $O(k \cdot |E|) = O(k^2 \cdot |V|)$.
\end{proof}

For Theorem~\ref{thm:main}, we need one additional result of Djidjev and Venkatesan \cite{DV95}. The same result for orientable surfaces was obtained earlier by Hutchinson and Miller~\cite{HutchinsonM86}.

\begin{thm} \label{thm:planarization}
For every graph $G = (V,E)$ embedded in a surface of genus $g$, there exists a set $X$ of $O(\sqrt{g|V|})$ vertices such that $G - X$ is planar.
\end{thm}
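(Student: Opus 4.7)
The plan is to prove the theorem by iteratively reducing the genus of the embedded graph: at each step, find a short non-contractible cycle in the current graph, delete its vertex set, and repeat. After at most $g$ iterations the remaining graph is planar, and I will argue that the total number of vertices removed is $O(\sqrt{g|V|})$.

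The key technical ingredient is the following systolic lemma: for every graph $G'$ with $n'$ vertices embedded in a surface of genus $g' \geqslant 1$, the shortest non-contractible cycle has length $O(\sqrt{n'/g'})$. To prove it, I would root a BFS tree $T$ at an arbitrary vertex and let $d$ be its depth. A tree--cotree decomposition (using $T$ together with a spanning tree of the dual disjoint from $T$) shows that there are at least $g'$ cotree edges whose fundamental cycle with respect to $T$ is non-contractible, so the shortest such cycle has length at most $2d+1$. For the reverse inequality, assume the shortest non-contractible cycle has length at least $\ell$; then every open ball of radius $\lfloor \ell/2 \rfloor$ around a vertex lifts isomorphically to a planar subgraph of the universal cover of the surface, and an Euler-characteristic / discharging argument over these balls forces $n' = \Omega(\ell^2 g')$, whence $\ell = O(\sqrt{n'/g'})$.

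Granting the lemma, the iteration proceeds as follows. Set $G_0 := G$ and, while $G_i$ has positive genus $g_i$ and $n_i$ vertices, apply the lemma to find a non-contractible cycle $C_i$ of length $O(\sqrt{n_i/g_i})$ and put $G_{i+1} := G_i - V(C_i)$. Cutting the surface along a non-contractible cycle yields an embedding of $G_{i+1}$ in a surface of strictly smaller genus, so $g_{i+1} \leqslant g_i - 1$; thus the process terminates after at most $g$ steps in a planar graph $G_k$. Using $n_i \leqslant |V|$ and $g_i \leqslant g-i$, the removed set $X := \bigcup_i V(C_i)$ satisfies
\[
|X| \;\leqslant\; \sum_{i=0}^{g-1} O\!\left(\sqrt{n_i/g_i}\right) \;\leqslant\; O\!\left(\sqrt{|V|}\right) \sum_{j=1}^{g} \frac{1}{\sqrt{j}} \;=\; O\!\left(\sqrt{g|V|}\right),
\]
as required.

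The principal obstacle is the systolic lemma, specifically the lower bound $n' = \Omega(\ell^2 g')$ under the large-systole hypothesis. For orientable surfaces this is essentially the Hutchinson--Miller argument; for the general (possibly non-orientable) case handled by Djidjev and Venkatesan, care is required because one-sided and two-sided non-contractible cycles interact differently with cutting, and one must also ensure that the intermediate embeddings remain $2$-cell (for instance by passing to a minimum-genus re-embedding at each step) so that the genus bookkeeping in the iteration remains valid.
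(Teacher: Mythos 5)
First, note that the paper does not prove this statement at all: it is quoted from Djidjev and Venkatesan \cite{DV95} (with the orientable case due to Hutchinson and Miller \cite{HutchinsonM86}), so your attempt must be measured against those works rather than against anything in the text.

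There is a genuine gap, and it sits exactly where you flagged uncertainty: your systolic lemma is false as stated. The correct bound on the shortest non-contractible cycle (the edge-width) of an $n'$-vertex graph embedded in a surface of genus $g'$ is $O(\sqrt{n'/g'}\,\log g')$, due to Hutchinson, and the $\log g'$ factor is known to be necessary: discretizing Buser--Sarnak hyperbolic surfaces (which have area $\Theta(g)$ and systole $\Theta(\log g)$) yields triangulations whose every non-contractible cycle has length $\Omega(\sqrt{n/g}\,\log g)$. Correspondingly, your proposed lower bound $n' = \Omega(\ell^2 g')$ under the large-systole hypothesis cannot be proved: the ball-lifting argument gives planarity of a single ball of radius $\lfloor \ell/2 \rfloor$ and hence only $n' = \Omega(\ell^2)$, and there is no way to find $g'$ disjoint such balls in general --- that is precisely what the Buser--Sarnak examples obstruct. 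With the true systolic bound your iteration yields only $|X| = O(\sqrt{g|V|}\,\log g)$, which falls short of the theorem.

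The actual proofs avoid systoles entirely by working at two scales. First one shows that if the embedded graph has a BFS tree of depth $d$, then it has a planarizing set of size $O(g d)$: take a tree--cotree decomposition and delete the vertices of $O(g)$ fundamental cycles, each of length at most $2d+1$ (this part of your sketch is sound). To remove the dependence on depth, delete every $k$-th BFS level with the offset chosen so that at most $|V|/k$ vertices are removed; each remaining piece, after contracting the inner levels to a root, embeds with the pieces' genera summing to at most $g$ and has BFS depth $O(k)$, so it can be planarized with $O(g_i k)$ further deletions. The total is $O(|V|/k + g k)$, which is $O(\sqrt{g |V|})$ for $k = \Theta(\sqrt{|V|/g})$. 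This level-chopping step is the idea missing from your proposal, and it is what makes the bound $O(\sqrt{g|V|})$ (which is tight) achievable where the shortest-cycle iteration provably cannot reach it.
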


Finally, we prove our main result, Theorem~\ref{thm:main}.

\begin{proof}[Proof of Theorem~\ref{thm:main}]
Let $G=(V,E)$ be a connected graph embedded in a surface of genus $g$. The result follows by combining Theorem \ref{thm:equiv}, Lemma~\ref{lem:deletion}, Theorem \ref{thm:Williams}, Theorem \ref{thm:planarization}, and the upper bound $|E| = O(|V| + g)$ (by Euler's formula).

More explicitly, letting $X \subseteq V$ be as in Theorem~\ref{thm:planarization}, 
\begin{align*}
\xc(\stp(G)) 
&\leqslant \xc(\nesubp(G)) + O(|E|)\\
&\leqslant \xc(\nesubp(G-X))+ O(|X| \cdot |E|)\\
&= O(|V|) + O(g^{1/2} |V|^{1/2} \cdot (|V|+g))\\
&= O(g^{1/2} |V|^{3/2} + g^{3/2} |V|^{1/2})\,. \qedhere
\end{align*}
\end{proof}

\section*{Acknowledgments}

We thank William Cook for asking about the extension complexity of the spanning tree polytope of graphs on the torus, which prompted this work. We thank also Jean Cardinal and Hans Raj Tiwary for interesting discussions on the topic, and the two referees for rightfully asking us to shorten the paper.

S.~Fiorini and T.~Huynh are supported by ERC grant \emph{FOREFRONT} (grant agreement no.\ 615640) funded by the European Research Council under the EU's 7th Framework Programme (FP7/2007-2013). S.~Fiorini also acknowledges support from ARC grant AUWB-2012-12/17-ULB2 \emph{COPHYMA} funded by the French community of Belgium. 
S.~Fiorini and K.~Pashkovich are grateful for the support of the  Hausdorff Institute for Mathematics in Bonn during the trimester program \emph{Combinatorial Optimization}. 
G.~Joret acknowledges support from an ARC grant from the Wallonia-Brussels Federation of Belgium. 

\bibliography{notes}{}
\bibliographystyle{plain}

\end{document}